\renewcommand{\@begintheorem}[2]{\it \trivlist            
      \item[\hskip \labelsep{\bf #1\ #2{\rm :}}]}         
\renewcommand{\@opargbegintheorem}[3]{\it \trivlist       
      \item[\hskip \labelsep{\bf #1\ #2\ {\rm (#3)\/:}}]}
\def\@sect#1#2#3#4#5#6[#7]#8{\ifnum #2>\c@secnumdepth
     \def\@svsec{}\else 
     \refstepcounter{#1}\edef\@svsec{\csname the#1\endcsname{.}\hskip 1em }\fi
     \@tempskipa #5\relax
      \ifdim \@tempskipa>\z@ 
        \begingroup #6\relax
          \@hangfrom{\hskip #3\relax\@svsec}{\interlinepenalty \@M #8\par}
        \endgroup
       \csname #1mark\endcsname{#7}\addcontentsline
         {toc}{#1}{\ifnum #2>\c@secnumdepth \else
                      \protect\numberline{\csname the#1\endcsname}\fi
                    #7}\else
        \def\@svsechd{#6\hskip #3\@svsec #8\csname #1mark\endcsname
                      {#7}\addcontentsline
                           {toc}{#1}{\ifnum #2>\c@secnumdepth \else
                             \protect\numberline{\csname the#1\endcsname}\fi
                       #7}}\fi
     \@xsect{#5}}
\newcommand{\Delete}[1]{}
\theoremstyle{plain}
\newtheorem{Thm}{Theorem}[section]
\newtheorem{Lem}[Thm]{Lemma}
\newtheorem{Rem}[Thm]{Remark}
\newcommand{\cS}{\ensuremath{\mathcal{S}}}
\title{A generalization of Opsut's lower bounds \\
for the competition number of a graph}
\author{
\begin{tabular}{c}
{\sc Yoshio SANO}\\
\\
National Institute of Informatics \\
Tokyo 101-8430, Japan \\
{\tt sano@nii.ac.jp}
\end{tabular}
}
\date{}
\begin{document}

\maketitle

\begin{abstract}
The notion of a competition graph was introduced by J. E. Cohen in 1968. 
The {\it competition graph} $C(D)$ of a digraph $D$ 
is a (simple undirected) graph which 
has the same vertex set as $D$ 
and has an edge between two distinct vertices $x$ and $y$ 
if and only if 
there exists a vertex $v$ in $D$ such that 
$(x,v)$ and $(y,v)$ are arcs of $D$. 
For any graph $G$, $G$ together with sufficiently many isolated vertices 
is the competition graph of some acyclic digraph. 
In 1978, F. S. Roberts defined 
the {\it competition number} $k(G)$ of a graph $G$ 
as the minimum number of such isolated vertices. 
In general, it is hard to compute the competition number 
$k(G)$ for a graph $G$ and it has been one of the 
important research problems in the study of 
competition graphs 
to characterize a graph by its competition number. 
In 1982, R. J. Opsut gave two lower bounds 
for the competition number of a graph. 
In this paper, 
we give a generalization of these two lower bounds for 
the competition number of a graph. 

\end{abstract}

\vspace{4mm}
\noindent
{\bf Keywords:} 
Competition graph; 
Competition number; 
Vertex clique cover number; 
Edge clique cover number \\

\noindent
{\bf 2010 Mathematics Subject Classification:} 
05C20, 05C69

\vspace{4mm}

\newpage
\section{Introduction}

Throughout this paper, all graphs $G$ are finite, simple, and undirected. 
The notion of a competition graph was introduced by J. E. Cohen \cite{Cohen1} 
in connection with a problem in ecology. 
The {\it competition graph} $C(D)$ of a digraph $D$ is the graph
which has the same vertex set 
as $D$ and has an edge between two distinct vertices 
$x$ and $y$ if and only if 
there exists a vertex $v$ in $D$ 
such that $(x,v)$ and $(y,v)$ are arcs of $D$. 
For any graph $G$, $G$ together 
with sufficiently many 
isolated vertices is the competition graph 
of an acyclic digraph. 
From this observation, 
F. S. Roberts \cite{MR0504054} defined the {\it competition number} $k(G)$ of
a graph $G$ to be the minimum number $k$ such that $G$ together with 
$k$ isolated vertices is the competition graph of an acyclic digraph: 
\begin{equation}
k(G):= \min \{k \in \mathbb{Z}_{\geq 0} \mid G \cup I_k = C(D) 
\text{ for some acyclic digraph } D \}, 
\end{equation}
where $I_k$ denotes a set of $k$ isolated vertices. 

A digraph is said to be {\it acyclic} 
if it contains no directed cycles. 
For a digraph $D$, 
an ordering $v_1, v_2, \ldots, v_n$ of the vertices of $D$
is called an {\it acyclic ordering} of $D$
if $(v_i,v_j) \in A(D)$ implies $i<j$.
It is well-known that a digraph $D$ is acyclic if and only if
there exists an acyclic ordering of $D$.
For a vertex $v$ in a digraph $D$, 
the {\it out-neighborhood} of $v$ in $D$ 
is defined to be the set 
$\{w \in V(D) \mid (v,w) \in A(D) \}$ 
and is denoted by $N_D^+(v)$, and 
the {\it in-neighborhood} of $v$ in $D$ 
is defined to be the set 
$\{ u \in V(D) \mid (u,v) \in A(D) \}$ 
and is denoted by $N_D^-(v)$.

For a vertex $v$ in a graph $G$, 
the {\it (open) neighborhood} of $v$ in $G$ 
is defined to be the set 
$\{u \in V(G) \mid uv \in E(G) \}$ 
and is denoted by $N_G(v)$. 
A subset $S \subseteq V(G)$ of the vertex set of a graph $G$ 
is called a {\it clique} of $G$ if the subgraph $G[S]$ 
of $G$ induced by $S$ is a complete graph. 
For a clique $S$ of a graph $G$ and an edge $e$ of $G$,
we say {\it $e$ is covered by $S$} 
if both of the endpoints of $e$ are contained in $S$.
An {\it edge clique cover} of a graph $G$
is a family of cliques of $G$ such that
each edge of $G$ is covered by some clique in the family 
(see \cite{MR770871} for applications of edge clique covers). 
The {\it edge clique cover number} $\theta_E(G)$ of a graph $G$ 
is the minimum size of an edge clique cover of $G$. 
A {\it vertex clique cover} of a graph $G$ 
is a family of cliques of $G$ such that 
each vertex of $G$ is contained in some clique in the family. 
The {\it vertex clique cover number} $\theta_V(G)$ of a graph $G$
is the minimum size of a vertex clique cover of $G$.

R. D.  Dutton and R. C. Brigham \cite{MR712930} 
characterized the competition graphs of acyclic digraphs 
in terms of edge clique covers. 
(F. S. Roberts and J. E. Steif \cite{MR712932} 
characterized the competition graphs of loopless digraphs. 
J. R. Lundgren and J. S. Maybee \cite{MR712931} 
gave a characterization of graphs whose competition number 
is at most $m$.) 

However, R. J. Opsut \cite{MR679638} showed that 
the problem of determining 
whether a graph is the competition graph of an acyclic digraph 
or not is NP-complete. 
It follows that the computation of the 
competition number of a graph is an NP-hard problem, 
and thus 
it does not seem to be easy in general to compute 
$k(G)$ for an arbitrary graph $G$ 
(see \cite{kimsu} and \cite{kr} 
for graphs whose competition numbers are known). 
It has been one of the important research problems in the study of 
competition graphs to characterize a graph by its competition number. 

R. J. Opsut gave the following two lower bounds for the competition number 
of a graph. 

\begin{Thm}[Opsut {\cite[Proposition 5]{MR679638}}]\label{thm:OpsutBdE}
For any graph $G$, 
\begin{equation}\label{eq:OpsutBdE}
k(G) \geq \theta_E(G)-|V(G)|+2. 
\end{equation}
\end{Thm}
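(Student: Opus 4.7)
The plan is to unpack the definition of $k(G)$ to produce an edge clique cover of $G$ from an optimal acyclic digraph realization, and to bound its size by exploiting the first two vertices of an acyclic ordering.

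First I would set $k = k(G)$, $n = |V(G)|$, and fix an acyclic digraph $D$ with $C(D) = G \cup I_k$, so $|V(D)| = n+k$. The crucial observation is that for every $v \in V(D)$, the in-neighborhood $N_D^-(v)$ is a clique of $C(D)$: any two distinct $x, y \in N_D^-(v)$ have a common prey $v$ and are therefore adjacent in $C(D)$. Moreover, every edge $xy$ of $C(D)$ arises from some common prey, so the family $\mathcal{F} = \{ N_D^-(v) : v \in V(D) \}$ is an edge clique cover of $C(D)$, and in particular covers every edge of $G$.

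Next I would trim $\mathcal{F}$. Only in-neighborhoods of size at least $2$ can cover edges, so I throw away those of size $\leq 1$. I also want to show that each remaining $N_D^-(v)$ is actually a clique of $G$, not merely of $G \cup I_k$: if it contained some isolated vertex $u \in I_k$ together with any other vertex, then $u$ would be adjacent to that vertex in $C(D)$, contradicting that $u$ is isolated. Hence every clique in the trimmed family lies entirely in $V(G)$ and is a clique of $G$, giving an honest edge clique cover of $G$.

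The final ingredient is to control $|\mathcal{F}|$ using acyclicity. Take an acyclic ordering $v_1, v_2, \ldots, v_{n+k}$ of $D$. Then $N_D^-(v_1) = \emptyset$ and $N_D^-(v_2) \subseteq \{v_1\}$, so at least two members of $\mathcal{F}$ have size $\leq 1$ and get discarded. Therefore $\theta_E(G) \leq (n+k) - 2$, which rearranges to $k(G) \geq \theta_E(G) - |V(G)| + 2$. No step is really an obstacle; the only point demanding care is the observation that in-neighborhoods of size $\geq 2$ cannot contain vertices of $I_k$, which ensures we obtain an edge clique cover of $G$ itself rather than only of $G \cup I_k$.
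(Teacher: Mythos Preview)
Your proof is correct and is essentially the same argument the paper uses: the paper derives Theorem~\ref{thm:OpsutBdE} as the $m=|V|-1$ case of Lemma~\ref{lem:main} (see Remark following the theorem), and when that general proof is specialized to $m=|V|-1$ one obtains exactly your reasoning---in-neighborhoods in an acyclic realization form an edge clique cover, and the first two vertices in an acyclic ordering contribute in-neighborhoods of size at most~$1$ that may be discarded. Your separate check that in-neighborhoods of size $\geq 2$ avoid $I_k$ plays the same role as the paper's intersection with $N_G[W]\subseteq V(G)$.
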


\begin{Thm}[Opsut {\cite[Proposition 7]{MR679638}}]\label{thm:OpsutBdV}
For any graph $G$, 
\begin{equation}\label{eq:OpsutBdV}
k(G) \geq \min \{ \theta_V(N_{G}(v)) \mid v \in V(G) \}. 
\end{equation}
\end{Thm}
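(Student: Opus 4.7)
The plan is to start from the definition of $k(G)$: set $k = k(G)$ and fix an acyclic digraph $D$ with $C(D) = G \cup I_k$, where $V(D) = V(G) \cup I_k$ and $|I_k| = k$. Since $D$ is acyclic, one can fix an acyclic ordering $w_1, w_2, \ldots, w_{n+k}$ of $V(D)$, with $n := |V(G)|$. The crucial step is to let $v = w_j$ be the \emph{last} vertex of $V(G)$ appearing in this ordering; then every $w_i$ with $i > j$ lies in $I_k$, so there are at most $k$ such $w_i$, and in particular $|N_D^+(v)| \leq k$.

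The next step is to exhibit a vertex clique cover of $N_G(v)$ of size at most $|N_D^+(v)|$. For each $w \in N_D^+(v)$, define $C_w := N_D^-(w) \setminus \{v\}$. Because $N_D^-(w)$ is a clique in $C(D) = G \cup I_k$ and contains $v$, any element of $C_w$ is adjacent to $v$ in $C(D)$ and must therefore lie in $V(G)$ (a vertex of $I_k$ has no neighbor in $C(D)$); hence $C_w \subseteq N_G(v)$ and $C_w$ is a clique of the induced subgraph $G[N_G(v)]$. Moreover, for each $u \in N_G(v)$ the edge $uv$ of $C(D)$ is witnessed by some common out-neighbor $w \in N_D^+(u) \cap N_D^+(v)$, which places $u$ in $C_w$. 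Thus $\{C_w \mid w \in N_D^+(v)\}$ covers every vertex of $N_G(v)$, and
\[
\min_{u \in V(G)} \theta_V(N_G(u)) \leq \theta_V(N_G(v)) \leq |N_D^+(v)| \leq k = k(G),
\]
as required.

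The principal obstacle is recognizing the correct choice of $v$: selecting the last vertex of $V(G)$ in the acyclic ordering is precisely what squeezes its out-neighborhood into the small set $I_k$, and this is the conceptual heart of the argument. Once that choice is made, the verification that the $C_w$'s are cliques contained in $N_G(v)$ (and not merely in $V(G)$) and that they cover $N_G(v)$ is a short direct check using only the definition of the competition graph together with the fact that vertices of $I_k$ are isolated in $C(D)$.
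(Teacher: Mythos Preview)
Your proof is correct and follows essentially the same route as the paper: the paper derives Theorem~\ref{thm:OpsutBdV} as the $m=1$ case of Lemma~\ref{lem:main}, where one takes the last vertex $v_n$ of $V(G)$ in an acyclic ordering and uses the in-neighborhoods $N_D^-(w)$ (intersected with $N_G[v_n]$) as the covering cliques, then observes in a remark that $\theta_E(E_G[v];N_G[v])=\theta_V(N_G(v))$. Your argument is a slightly streamlined direct version of this: you index the cliques only by $N_D^+(v)$ rather than by all of $I_k$, you work with vertex clique covers from the start rather than passing through edge clique covers, and you avoid the paper's appeal to a \emph{minimal} digraph $D$ (used there to force the isolated vertices to the end of the ordering) by simply locating the last $V(G)$-vertex wherever it falls.
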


\noindent
These seem to be the only known sharp lower bounds for 
an arbitrary graph $G$. 

In this paper, 
we give a generalization of 
these two lower bounds which contains both as special cases. 
In particular, our main result contains both lower bounds 
given in Theorems \ref{thm:OpsutBdE} and \ref{thm:OpsutBdV} 
as special cases. 
The proof of our main result is elementary, 
but the new lower bound given in this paper 
would be a strong tool in the study of the competition number of a graph.

\section{Main Result}

Let $G$ be a graph and $F \subseteq E(G)$ be a subset of the edge set of $G$. 
An {\it edge clique cover} of $F$ in $G$ is a family of cliques of $G$ 
such that each edge in $F$ is covered by some clique in the family. 
We define the {\it edge clique cover number} 
$\theta_E(F;G)$ of $F \subseteq E(G)$ in $G$ as 
the minimum size of an edge clique cover of $F$ in $G$: 
\begin{equation}
\theta_E(F;G) := \min \{|\cS| \mid \cS \text{ is an edge clique cover of } 
F \text{ in } G \}. 
\end{equation}
By definition, it follows that 
the edge clique cover number 
$\theta_E(E(G); G)$ 
of $E(G)$ in a graph $G$ is equal to 
the edge clique cover number $\theta_E(G)$ of the graph $G$. 

Let $G$ be a graph and $U \subseteq V(G)$ be a subset of the vertex set of $G$. We define 
\begin{eqnarray}
N_G[U] &:=& \{v \in V(G) \mid v 
\text{ is adjacent to a vertex in } U \} \cup U, \\
E_G[U] &:=& \{e \in E(G) \mid e \text{ has an endpoint in } U \}. 
\end{eqnarray}
We denote by the same symbol $N_G[U]$ the subgraph of $G$ induced by $N_G[U]$. 
Note that $E_G[U]$ is contained in the edge set of the subgraph $N_G[U]$. 
We denote by ${V \choose m}$ the set of all $m$-subsets of a set $V$. 

Now we are ready to state our main result. 

\begin{Thm}\label{thm:main}
Let $G=(V,E)$ be a graph. Then 
\begin{equation}\label{eq:thm:main}
k(G) \geq 
\max_{m \in \{1, \ldots, |V|\} } 
\min_{U \in {V \choose m}} 
\Big{(}
\theta_E(E_{G}[U]; N_{G}[U]) -|U|+1 
\Big{)}. 
\end{equation} 
\end{Thm}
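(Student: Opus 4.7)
The plan is to mimic and generalize the standard proof of Opsut's bounds, using the $m$-subset of $V(G)$ consisting of the last $m$ vertices of $G$ in an acyclic ordering of an optimal digraph. I would let $D$ be an acyclic digraph with $C(D) = G \cup I_{k(G)}$, fix an acyclic ordering of $V(D)$, and write $w_{1}, w_{2}, \ldots, w_{n}$ for the vertices of $V(G)$ in the order they appear in this acyclic ordering, where $n = |V|$. For a given $m \in \{1, \ldots, n\}$, I would set $U := \{w_{n-m+1}, \ldots, w_{n}\}$, the last $m$ vertices of $V(G)$ in the ordering. Since $U$ is a particular element of ${V \choose m}$, bounding $k(G)$ from below via this specific $U$ suffices for the inner minimum, and letting $m$ range then handles the outer maximum.

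The core step uses the classical fact that, for any $v \in V(D)$, the in-neighborhood $N_{D}^{-}(v)$ is a clique of $C(D) = G \cup I_{k(G)}$, and that any such clique of size at least two is contained in $V(G)$ (since an isolated vertex of $G \cup I_{k(G)}$ cannot share an in-neighbor with another vertex). I would argue that
\[
\cS := \{N_{D}^{-}(v) \cap N_{G}[U] : v \in V(D) \text{ strictly follows } w_{n-m+1}\}
\]
is an edge clique cover of $E_{G}[U]$ in $N_{G}[U]$. Indeed, any edge $e = xy \in E_{G}[U]$ has both endpoints in $N_{G}[U]$ and at least one endpoint in $U$. Since $xy \in E(C(D))$, there is some $v \in V(D)$ with $x, y \in N_{D}^{-}(v)$; this $v$ strictly follows both $x$ and $y$ in the acyclic ordering, and because an endpoint of $e$ lies in $U$ (hence occurs at or after $w_{n-m+1}$), the vertex $v$ strictly follows $w_{n-m+1}$. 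Restricting $N_{D}^{-}(v)$ to $N_{G}[U]$ preserves the clique property and still contains both $x$ and $y$, so $\cS$ covers $E_{G}[U]$ as claimed.

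To conclude, I would count the cliques in $\cS$: the vertices of $D$ strictly following $w_{n-m+1}$ consist of the $m-1$ later vertices $w_{n-m+2}, \ldots, w_{n}$ of $V(G)$ together with at most $k(G)$ vertices of $I_{k(G)}$, so $|\cS| \leq m - 1 + k(G)$. Thus $\theta_{E}(E_{G}[U]; N_{G}[U]) \leq m - 1 + k(G)$, which rearranges to $k(G) \geq \theta_{E}(E_{G}[U]; N_{G}[U]) - |U| + 1$; taking $\min$ over $U \in {V \choose m}$ and then $\max$ over $m$ gives the theorem. The main obstacle I anticipate is the combined ordering and counting argument showing that every vertex contributing a covering clique must strictly follow $w_{n-m+1}$ and that the restriction to $N_{G}[U]$ loses nothing; once that is cleanly handled, the rest is just rearrangement. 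As a sanity check, $m = 1$ recovers Opsut's vertex bound (via the natural bijection between edge clique covers of $E_{G}[\{v\}]$ in $N_{G}[\{v\}]$ and vertex clique covers of $N_{G}(v)$), and $m = n-1$ recovers Opsut's edge bound (since then $E_{G}[U] = E(G)$).
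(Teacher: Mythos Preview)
Your proposal is correct and follows essentially the same approach as the paper: take the last $m$ vertices of $V(G)$ in an acyclic ordering of an optimal digraph $D$, observe that the intersections $N_D^-(v)\cap N_G[U]$ for $v$ strictly after the first of these $m$ vertices form an edge clique cover of $E_G[U]$ in $N_G[U]$, and bound the number of such cliques by $m-1+k(G)$. The only cosmetic difference is that the paper assumes $D$ is arc-minimal so that all of $I_{k(G)}$ can be placed last in the acyclic ordering, whereas you allow an arbitrary ordering and simply note that at most $k(G)$ isolated vertices follow $w_{n-m+1}$; both lead to the same count.
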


To prove our main theorem, we show the following lemma. 

\begin{Lem}\label{lem:main}
Let $G=(V,E)$ be a graph. 
Let $m$ be an integer such that $1 \leq m \leq |V|$. 
Then 
\begin{equation}\label{eq:lem:main}
k(G) \geq 
\min_{U \in {V \choose m}}
\theta_E(E_{G}[U]; N_{G}[U]) -m+1. 
\end{equation}
\end{Lem}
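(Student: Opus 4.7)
The plan is to unpack the definition of $k(G)$: fix an acyclic digraph $D$ with $V(D) = V \cup I_k$ and $C(D) = G \cup I_k$, where $k := k(G)$ and $I_k$ is a set of $k$ isolated vertices, and then, for a well-chosen $U \in {V \choose m}$, exhibit an edge clique cover of $E_G[U]$ in $N_G[U]$ of size at most $k + m - 1$. First I would fix an acyclic ordering $u_1, u_2, \ldots, u_{n+k}$ of $V(D)$ (with $n := |V|$) and let $i_1 < i_2 < \cdots < i_n$ be the positions at which the vertices of $V$ occur, so that $V = \{u_{i_1}, \ldots, u_{i_n}\}$. I would then take $U := \{u_{i_{n-m+1}}, u_{i_{n-m+2}}, \ldots, u_{i_n}\}$ to be the last $m$ vertices of $V$ in this ordering, and write $u^* := u_{i_{n-m+1}}$ for its earliest element.

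The candidate edge clique cover would then be
\[
\cS := \{\, N_D^-(w) \mid w = u_j \text{ with } j > i_{n-m+1},\ |N_D^-(w)| \geq 2,\ N_D^-(w) \cap U \neq \emptyset \,\}.
\]
To verify that each member of $\cS$ is a clique of $N_G[U]$, I would fix any anchor $x \in N_D^-(w) \cap U$; for every other $z \in N_D^-(w)$, the vertices $x$ and $z$ share the common out-neighbor $w$, so $xz \in E(C(D))$, and since $|N_D^-(w)| \geq 2$ forces $N_D^-(w) \subseteq V$ (a vertex of $C(D)$ that lies in a clique of size at least two cannot be isolated in $C(D) = G \cup I_k$), we get $xz \in E(G)$ and hence $z \in N_G(x) \subseteq N_G[U]$. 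To verify that $\cS$ covers $E_G[U]$, I would take $e = \{x,y\} \in E_G[U]$ with $x \in U$; the edge $xy$ in $C(D)$ yields a common out-neighbor $w$ of $x$ and $y$ in $D$, and the arc $(x, w) \in A(D)$ places $w$ strictly after $x$, hence strictly after $u^*$, in the acyclic ordering, so $N_D^-(w) \in \cS$.

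The last step is to count $|\cS|$: each $w$ contributing to $\cS$ lies at a position strictly greater than $i_{n-m+1}$, so $|\cS| \leq n + k - i_{n-m+1}$; and since $i_1 < i_2 < \cdots < i_{n-m}$ are all strictly less than $i_{n-m+1}$, we have $i_{n-m+1} \geq n - m + 1$, whence $|\cS| \leq k + m - 1$. Therefore $\theta_E(E_G[U]; N_G[U]) \leq k + m - 1$, and taking the minimum over all $U \in {V \choose m}$ yields (\ref{eq:lem:main}). The one delicate point will be showing that the cliques in $\cS$ sit inside $N_G[U]$ rather than merely inside $V$; this is exactly why the anchor condition $N_D^-(w) \cap U \neq \emptyset$ is built into the definition of $\cS$, so that a fixed $x \in U$ forces all other vertices of $N_D^-(w)$ to be neighbors of $x$ in $G$.
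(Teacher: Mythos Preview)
Your proof is correct and follows essentially the same strategy as the paper: pick an acyclic ordering of a witnessing digraph $D$, let $U$ be the last $m$ vertices of $V$ in this ordering, and use the in-neighborhoods $N_D^-(w)$ for the $m+k-1$ vertices $w$ coming after the first element of $U$ as an edge clique cover of $E_G[U]$ in $N_G[U]$. The only cosmetic differences are that the paper assumes a minimal $D$ so that the isolated vertices can be placed last in the ordering (avoiding your index-tracking argument for $i_{n-m+1}\ge n-m+1$), and that the paper forces the cliques into $N_G[W]$ by intersecting each $N_D^-(w)$ with $N_G[W]$ rather than by your anchor condition $N_D^-(w)\cap U\neq\emptyset$.
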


\begin{proof}
Let $k:=k(G)$ for convenience. Fix an integer $m$ such that 
$1 \leq m \leq |V|$. 
Let $D$ be a 
minimal 
acyclic digraph 
with respect to the number of arcs 
such that $C(D)=G \cup I_k$, 
where $I_k:=\{z_1, \ldots, z_k\}$ is a set of $k$ isolated vertices. 
Let $v_1, \ldots, v_n, z_1, \ldots, z_k$ 
be an acyclic ordering of $D$, and 
let $W:=\{v_{n-m+1}, \ldots, v_{n} \}$. 
Note that $|W|=m$. 
Let 
\[
\cS:=\{N_D^-(w) \cap N_G[W] \mid w \in (W \cup I_k) 
\setminus \{v_{n-m+1}\} \}. 
\] 
For each $w \in (W \cup I_k) 
\setminus \{v_{n-m+1}\}$, 
since $N_D^-(w)$ forms a clique of the graph $G$, 
the set $N_D^-(w) \cap N_G[W]$ forms a clique of the induced subgraph 
$N_G[W]$ of $G$. Thus $\cS$ is a family of cliques of $N_G[W]$. 

Take any edge $e=uv \in E_G[W]$, where $u \in W$ and $v \in N_G(u)$. 
Since $u$ and $v$ are adjacent, there exists a common out-neighbor 
$w \in N_D^+(u) \cap N_D^+(v)$. 
Then $\{u,v\} \subseteq N^-_D(w)$. 
Since $v_1, \ldots, v_n, z_1, \ldots, z_k$ 
is an acyclic ordering of $D$, 
the out-neighborhood $N_D^+(u)$ of $u$ in $D$ 
is contained in the set 
$(W \cup I_k) \setminus \{v_{n-m+1}\}$ 
for each vertex $u \in W$. 
Therefore $N_D^+(u) \cap N_D^+(v) \subseteq 
(W \cup I_k) \setminus \{v_{n-m+1} \}$ 
and so $w \in (W \cup I_k) \setminus \{v_{n-m+1} \}$. 
Then it follows that 
the edge $e$ is covered by $N_D^-(w) \cap N_G[W] \in \cS$. 

Thus the family $\cS$ 
is an edge clique cover of $E_{G}[W]$ in $N_G[W]$. 
This implies that 
$\theta_E(E_G[W]; N_G[W]) \leq |\cS| = m+k-1$, 
that is, 
$\theta_E(E_G[W]; N_G[W]) -m+1 \leq k$. 
Hence 
\[
\min_{U \in {V \choose m}} \theta_E(E_G[U]; N_G[U]) -m+1 
\leq \theta_E(E_G[W]; N_G[W]) -m+1 \leq k(G), 
\]
and the lemma holds. 
\end{proof}

\begin{proof}[Proof of Theorem \ref{thm:main}]
Since the inequality (\ref{eq:lem:main}) holds for any 
$m \in \{1, \ldots, |V|\}$, 
it follows that the inequality (\ref{eq:thm:main}) holds. 
\end{proof}


\begin{Rem}
{\rm 
Consider the case $m=1$ in the inequality (\ref{eq:lem:main}). 
Then we obtain 
\[
k(G) \geq \min_{v \in V(G)} \theta_E(E_G[v]; N_G[v] ) . 
\]
Since a family $\{S_1, \ldots, S_r \}$ of cliques is an edge clique cover of 
$E_G[v]$ in $G$ if and only if 
$\{S_1 \cap N_G[v], \ldots, S_r \cap N_G[v] \}$ 
is an edge clique cover of $E_G[v]$ in $N_G[v]$, 
it holds that 
$\theta_E(E_G[v]; N_G[v]) = \theta_E(E_G[v];G)$. 
Since a family $\{S_1, \ldots, S_r \}$ of cliques is an edge clique cover of 
$E_G[v]$ in $G$ if and only if 
$\{S_1 \setminus \{v\}, \ldots, S_r \setminus \{v\} \}$ 
is a vertex clique cover of $N_G(v)$ in $G$, 
it holds that 
$\theta_E(E_G[v];G) = \theta_V(N_G(v))$. 
Therefore we have 
$\theta_E(E_G[v];N_G[v]) = \theta_V(N_G(v))$. 
Hence the above inequality coincides with 
the lower bound (\ref{eq:OpsutBdV}) 
in Theorem \ref{thm:OpsutBdV}. 
}
\end{Rem}

\begin{Rem}
{\rm 
Consider the case $m=|V|-1$ in the inequality (\ref{eq:lem:main}). 
Then we obtain 
\[
k(G) \geq \min_{v \in V} \theta_E(E_G[V \setminus \{v\}]; 
N_G[V \setminus \{v\}] ) - |V|+2. 
\]
Since $G=(V,E)$ has no loops, 
it holds that $E_G[V \setminus \{v\}] = E$. 
If the vertex $v$ is not isolated in $G$, 
then we have $N_G[V \setminus \{v\}] = V$ 
and thus 
$\theta_E(E_G[V \setminus \{v\}]; N_G[V \setminus \{v\}] ) = 
\theta_E(E; G) = \theta_E(G)$. 
If $v$ is an isolated vertex, 
then we have $N_G[V \setminus \{v\}]=V \setminus \{v\}$ 
and thus 
$\theta_E(E_G[V \setminus \{v\}]; N_G[V \setminus \{v\}] ) = 
\theta_E(E; G-\{v\}) = \theta_E(E; G) = \theta_E(G)$. 
Hence the above inequality coincides with 
the lower bound (\ref{eq:OpsutBdE}) 
in Theorem \ref{thm:OpsutBdE}. 
}
\end{Rem}

\begin{Rem}
{\rm 
The new lower bound given in Theorem \ref{thm:main} 
is a strong tool to compute the exact values of 
the competition numbers of graphs, 
especially for symmetric graphs 
such as complete multipartite graphs, Johnson graphs, Hamming graphs, 
etc (see \cite{KS}, \cite{KPS2010}, \cite{KPS2012}, \cite{KPS2011-}, 
\cite{PKS2009}, \cite{PS1}, \cite{PS2}, \cite{S}). 
}
\end{Rem}

\section*{Acknowledgment}

The author thanks the anonymous reviewers for helpful comments. 
The author was supported by JSPS Research Fellowships 
for Young Scientists. 


\end{document}